\documentclass[11pt]{article}

\usepackage[margin=1.1in]{geometry}
\usepackage{amsmath,amssymb,amsthm}
\usepackage{mathtools}
\usepackage[hidelinks]{hyperref}

\newtheorem{theorem}{Theorem}
\newtheorem{lemma}[theorem]{Lemma}
\newtheorem{corollary}[theorem]{Corollary}

\newtheorem{conjecture}[theorem]{Conjecture}

\newcommand{\chieq}{\chi_=}
\newcommand{\amin}{\alpha_{\min}}
\newcommand{\ceil}[1]{\left\lceil #1 \right\rceil}
\newcommand{\floor}[1]{\left\lfloor #1 \right\rfloor}

\title{A disproof of a gap-one conjecture for the equitable chromatic number of block graphs}
\author{Juho Lauri\thanks{Corresponding author. Helsinki, Finland.
Email: juho.lauri@gmail.com}}
\date{}

\begin{document}

\maketitle

\begin{abstract}
For a graph \(G\), let
\[
  L(G)=
  \max\left\{
    \omega(G),
    \ceil{\frac{|V(G)|+1}{\amin(G)+1}}
  \right\},
\]
where \(\omega(G)\) denotes the clique number, and
\[
  \amin(G)=\min_{v\in V(G)}
  \max\{|I|\mid I\text{ is independent and }v\in I\}.
\]
Dybizba\'nski, Furma\'nczyk, and Mkrtchyan (Discrete Appl.\ Math.\ 354
(2024), 15--28) conjectured that every block graph \(G\) satisfies
\(L(G)\le \chieq(G)\le L(G)+1\), where \(\chieq(G)\) is the equitable
chromatic number of \(G\).
We disprove this conjecture in a strong form.  For every pair of integers
\(d\ge2\) and \(k\ge4d-1\), we construct a connected block graph \(G_{d,k}\)
such that
\(L(G_{d,k})=k\) and \(\chieq(G_{d,k})=k+d\).  Thus the difference
\(\chieq(G)-L(G)\) is unbounded on connected block graphs.
\end{abstract}

\noindent\textbf{Keywords:} equitable chromatic number; equitable coloring; block graph

\section{Introduction}

All graphs in this note are finite and simple, and all numbers of colors are
positive integers.

An \emph{equitable \(q\)-coloring} of a graph \(G\) is a proper vertex coloring
with \(q\) colors in which all color classes have sizes
\[
  \floor{\frac{|V(G)|}{q}}
  \quad\text{or}\quad
  \ceil{\frac{|V(G)|}{q}}.
\]
The \emph{equitable chromatic number} \(\chieq(G)\) is the least \(q\) for
which such a coloring exists.  Equitable coloring is a classical variant of
graph coloring.  A foundational theorem of Hajnal and
Szemer\'edi~\cite{HajnalSzemeredi} states that every graph of maximum degree
\(\Delta\) has an equitable \((\Delta+1)\)-coloring.

Let \(\alpha(G)\) denote the independence number of \(G\), that is, the
maximum size of an independent set in \(G\).  For a vertex \(v\in V(G)\), let
\[
  \alpha(G,v)=
  \max\{|I|\mid I\text{ is independent and }v\in I\},
\]
and let
\[
  \amin(G)=\min_{v\in V(G)}\alpha(G,v).
\]
Every graph satisfies
\[
  \chieq(G)\ge
  L(G)=
  \max\left\{
    \omega(G),
    \ceil{\frac{|V(G)|+1}{\amin(G)+1}}
  \right\}.
\]
Indeed, the clique-number bound \(\chieq(G)\ge\omega(G)\) is immediate.  For the second term, choose \(v\) with
\(\alpha(G,v)=\amin(G)\).  In an equitable \(q\)-coloring, the class containing
\(v\) has size at most \(\amin(G)\), and every other class has size at most
\(\amin(G)+1\).  Hence \(|V(G)|\le q(\amin(G)+1)-1\).

Block graphs form a natural test class for equitable coloring.  A
\emph{block graph} is a graph whose blocks are cliques.  They are chordal and
tree-like in structure, but equitable coloring is algorithmically hard on this
class.  In fact, equitable \(q\)-colorability is \(\mathrm{W}[1]\)-hard on
block graphs when parameterized by \(q\)
\cite{GomesLimaSantosParameterized}.  Related structural parameterizations of
equitable coloring were studied by Gomes, Guedes, and
dos Santos~\cite{GomesGuedesSantosLATIN,GomesGuedesSantosAlgorithmica}.
Dybizba\'nski, Furma\'nczyk, and
Mkrtchyan~\cite{DybizbanskiFurmanczykMkrtchyan} observed that the lower bound
\(L(G)\) need not be exact, but that
their examples missed it by only one color.  Motivated by this
Vizing--Goldberg-type phenomenon of a natural lower bound being off by at most
one, they made the following conjecture.

\begin{conjecture}[Dybizba\'nski--Furma\'nczyk--Mkrtchyan~\cite{DybizbanskiFurmanczykMkrtchyan}]
\label{conj-gap-one}
Every block graph \(G\) satisfies
\[
  \chieq(G)\le L(G)+1.
\]
\end{conjecture}

Several results supported Conjecture~\ref{conj-gap-one}.  For forests, the
asserted inequality follows from a theorem of
Chang~\cite{ChangEquitableForests}.  Dybizba\'nski, Furma\'nczyk, and
Mkrtchyan~\cite{DybizbanskiFurmanczykMkrtchyan} established the same bound for
several further subclasses, including well-covered block graphs, connected
block graphs with \(\amin(G)\le2\), and block graphs in which every cut vertex
belongs to exactly two blocks.  The authors also verified the conjecture
computationally for all block graphs of order at most \(19\).  Further
structural and algorithmic results on equitable colorings of block graphs were
obtained by Furma\'nczyk and
Mkrtchyan~\cite{FurmanczykMkrtchyanAlgorithmic,FurmanczykMkrtchyanGeneral}.

We show that Conjecture~\ref{conj-gap-one} is false, and in fact that no bound of the form
\(L(G)+C\), with \(C\) an absolute constant, can hold for all block graphs.

\begin{theorem}\label{thm-main}
For every pair of integers \(d\ge2\) and \(k\ge4d-1\), there is a connected
block graph \(G_{d,k}\) such that
\[
  L(G_{d,k})=k
  \qquad\text{and}\qquad
  \chieq(G_{d,k})=k+d.
\]
\end{theorem}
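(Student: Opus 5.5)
The plan is to build \(G_{d,k}\) around a ``hub'' mechanism that the single-vertex argument behind \(L(G)\) does not detect. That argument uses only one vertex \(v\) with \(\alpha(G,v)=\amin(G)\): it caps the class of \(v\) at \(\amin(G)\) and every other class at \(\amin(G)+1\). If instead there is a clique \(K\) of \(m\) vertices, each with \(\alpha(G,c)=a\), then in a proper coloring these \(m\) vertices lie in \(m\) distinct classes, each of size at most \(a\). Suppose moreover that \(\ceil{|V(G)|/q}\le a+1\). Then
\[
|V(G)|\le q(a+1)-m .
\]
Compare this with the single deficit of one that appears in \(L(G)\). Taking \(|V(G)|\) close to \(k(a+1)-1\) keeps \(\ceil{(|V(G)|+1)/(a+1)}=k\), yet forces \(q\ge k+(m-1)/(a+1)\). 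So I will take \(m=\omega=k\) and choose \(a\) so that \(\lceil (k-1)/(a+1)\rceil\) is about \(d\).

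\textbf{Construction.} Take a central clique \(K=\{c_1,\dots,c_k\}\). At each \(c_i\), hang a bounded number of pendant cliques of sizes at most \(k\), arranged so that two things hold:
\begin{itemize}
\item[(i)] every \(c_i\) has \(\alpha(G,c_i)=a\), with \(a+1\) roughly \(k/d\), and \(a\) is the global minimum \(\amin(G)\);
\item[(ii)] the total order \(n\) satisfies \(\ceil{(n+1)/(a+1)}\le k\), so that \(L(G_{d,k})=\omega=k\).
\end{itemize}
The lever for (i) is that removing \(N[c_i]\) deletes the pendant cliques at \(c_i\) and the rest of \(K\), while leaving the pendant structures at the other hubs. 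So \(\alpha(G,c_i)\) is governed by the pendant cliques hung at the \(c_j\), \(j\neq i\). I will therefore hang a structure at each hub that contributes to the independence number when seen from the other hubs but not when seen from its own hub. I will also make sure the non-hub vertices have \(\alpha\) at least \(a\). The inequality \(k\ge 4d-1\) should appear exactly when reconciling (i), (ii), and the size bookkeeping in the next step.

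\textbf{Lower bound \(\chieq\ge k+d\).} For any \(q<k+d\), I will first show \(\ceil{n/q}\le a+1\). Then the \(k\) hub classes have size at most \(a\) and the remaining classes have size at most \(a+1\). This gives \(n\le q(a+1)-k\), which contradicts the chosen \(n\). A subtle point is that for some \(q\) the value \(\floor{n/q}\) could equal \(a\) or less, which makes the hub constraint free. I will handle this by choosing \(n\) so that \(n/q>a\) throughout the range \(k\le q\le k+d-1\). This is another place where \(k\ge 4d-1\) should be needed.

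\textbf{Upper bound \(\chieq\le k+d\).} I will give an explicit equitable \((k+d)\)-coloring. The \(k\) hubs receive colors \(1,\dots,k\), and each of the \(d\) extra colors is used on vertices inside pendant cliques. I will fill the pendant cliques greedily, always skipping the hub's own color, so that class sizes differ by at most one. The block structure makes this a sequence of independent clique-by-clique assignments.

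I expect the main obstacle to be the construction itself: achieving \(\alpha(G,c_i)=a\) simultaneously for all hubs, with \(a\) the true minimum over \(G\), while keeping \(n\) in the narrow window required by both the lower and upper bounds. I would first try pendant cliques of size \(k\) at each hub, with a tunable number of leaves, and solve the resulting linear constraints for \(a\) and \(n\) in terms of \(k\) and \(d\).
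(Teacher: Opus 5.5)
There is a genuine gap, and it is in requirement (i). On block graphs it cannot be met, whatever structures you hang at the hubs.

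Your own observation already shows this for the construction you describe. If every hub carries a pendant clique, an independent set through \(c_i\) can take one non-hub vertex hanging at each of the other \(k-1\) hubs. So \(\alpha(G,c_i)\ge k\), not roughly \(k/d\).

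The general argument is as follows. The hubs form a clique, so they lie in one block \(B\). For \(x\in B\), let \(H_x\) consist of \(x\) together with the components of \(G-V(B)\) attached at \(x\). Put \(A_x=\alpha(H_x-x)\) and \(S=\sum_{x\in B}A_x\). The sets \(V(H_x)\setminus\{x\}\) with \(x\ne c_i\) avoid \(N[c_i]\) and are mutually non-adjacent, so \(\alpha(G,c_i)\ge 1+S-A_{c_i}\).

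Now suppose \(m>a\) hubs all satisfy \(\alpha(G,c_i)\le a\). Summing \(A_{c_i}\ge 1+S-a\) over the hubs gives \(S\ge m(1+S-a)\), which forces \(S\le a-1\). Each \(H_x-x\) is chordal, hence perfect. It is therefore covered by \(A_x\) cliques, each of size at most \(\omega\le k\). This gives \(n\le |B|+kS\le ka\).

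Consequently \(n\le qa\) for every \(q\ge k\). No equitable class is ever forced above size \(a\), so the hub constraint never bites. Your step ``choose \(n\) with \(n/q>a\) for \(k\le q\le k+d-1\)'' is impossible.

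Using fewer hubs does not help. Your own count needs \(n>(k+d-1)(a+1)-m\) together with \(n\le k(a+1)-1\), that is, \(m\ge (d-1)(a+1)+2>a\). So on block graphs the ``many hubs with small \(\alpha\)'' mechanism can rule out at most \(q=k\), which is a gap of one.

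The missing idea is to force a single color class strictly \emph{below} \(\amin\), rather than accumulating many deficits of one. The paper takes two adjacent vertices \(v,w\) with \(\alpha(G,v)=\alpha(G,w)=\amin=4\). Every independent set of size \(4\) through either of them uses the same pendant vertex \(u\). Since \(v\) and \(w\) get different colors, and only one of their classes can contain \(u\), some class has size at most \(3\).

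Concretely, the graph is a central \(K_{4d-1}\) containing \(v,w,c\), with two \(K_k\)-blocks attached at each of \(v\) and \(w\) and a pendant edge \(cu\). Its order is \(4(k+d-1)\). Hence \(\floor{n/q}\ge4\) for every \(q\le k+d-1\), contradicting the class of size at most \(3\). Meanwhile \(L=\max\{k,\ceil{(4k+4d-3)/5}\}=k\); the hypothesis \(k\ge4d-1\) is used both here and to get \(\omega=k\). An explicit \((k+d)\)-coloring with four classes of size \(3\) and all others of size \(4\) supplies the upper bound.
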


Taking \(d\) arbitrarily large gives the following immediate consequence.

\begin{corollary}
The difference \(\chieq(G)-L(G)\) is unbounded on connected block graphs.
\end{corollary}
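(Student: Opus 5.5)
The plan is to build \(G_{d,k}\) explicitly, compute \(\omega\) and \(\amin\) directly so that \(L(G_{d,k})=k\), exhibit an equitable \((k+d)\)-coloring, and then show by counting that no equitable \(q\)-coloring exists for \(k\le q\le k+d-1\). The construction I would try is a central clique \(K\) of size \(k\) that contains \(d\) distinguished ``hub'' vertices \(u_1,\dots,u_d\). Each hub \(u_i\) is the cut vertex of many further cliques of size at most \(k\), and the remaining vertices of \(K\) carry pendant cliques. The parameters are tuned so that \(\omega(G_{d,k})=k\) and \(|V(G_{d,k})|+1\le k(\amin+1)\). Then \(L(G_{d,k})=k\) with the clique term attaining the maximum. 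The side condition \(k\ge 4d-1\) should be exactly what makes the cliques around the hubs fit inside size \(k\) while keeping \(\amin\) as designed.

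The key mechanism is that a hub \(u_i\) sees a large part of the graph through its cliques. Any independent set through \(u_i\) must avoid all these cliques, so \(\alpha(G,u_i)\) is small; call it \(a\), and arrange \(a=\amin\). In any proper coloring, \(u_1,\dots,u_d\) lie in \(d\) distinct classes because \(K\) is a clique, and each of those classes has size at most \(a\). The lower bound \(L\) only charges for one such small class; here there are \(d\) of them. A second ingredient is needed to turn this into \(d\) extra colors rather than a deficit of roughly \(d/(a+1)\) vertices. I would make the hubs' neighborhoods nearly cover the graph while the non-hub vertices are forced, through the pendant cliques of size \(k\), to use every color many times. Equitability with \(q<k+d\) colors then forces the floor \(\lfloor n/q\rfloor\) to exceed \(a\), which contradicts the size bound on the hub classes. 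Concretely, I would choose \(n\) so that \(\lfloor n/q\rfloor\ge a+1\) for every \(q\le k+d-1\) but \(\lceil n/(k+d)\rceil \le a\) or similar. This reduces to an arithmetic window on \(n\), and \(k\ge 4d-1\) should guarantee that the window is nonempty.

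For the upper bound, I would color \(K\) with colors \(1,\dots,k\), reserve colors \(k+1,\dots,k+d\), and greedily fill the cliques hanging from each cut vertex, one block at a time in a tree order. At each block only the color of its cut vertex is forbidden. I would then balance the class sizes by swapping colors inside individual blocks, which is legitimate in a block graph since permuting colors within a leaf block preserves properness.

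The main obstacle is the lower-bound step. It must rule out every \(q\) in \([k,k+d-1]\) simultaneously, which means tuning the sizes so that \(\lfloor n/q\rfloor>a\) over this whole range. At the same time, \(\amin\) must not drop: some non-hub vertex could have an even smaller \(\alpha(G,v)\), which would change \(L\). I would first fix \(a\) and \(n\) as functions of \(d\) and \(k\), for instance \(n\approx (k+d-1)(a+1)\), and only afterwards size the pendant cliques to realize those values.
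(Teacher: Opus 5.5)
There is a genuine gap, and it is in the lower-bound mechanism. You ask for two incompatible things.

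\begin{itemize}
\item First, \(|V(G_{d,k})|+1\le k(\amin+1)\), so that \(L=k\).
\item Second, \(\floor{n/q}\ge a+1\) for every \(q\le k+d-1\), where \(a=\amin\) bounds the hub classes.
\end{itemize}

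Already at \(q=k\), the second condition gives \(n\ge k(a+1)\), which contradicts the first. So the arithmetic window you hope \(k\ge 4d-1\) makes nonempty is empty for every \(d\ge1\).

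This is not a tuning issue. The reasoning ``the class of a vertex with \(\alpha(G,v)=\amin\) has at most \(\amin\) vertices, so \(\floor{n/q}\le\amin\)'' is exactly the derivation of the second term of \(L(G)\). Hence no argument that uses only \(\alpha(G,u_i)\le\amin\) can ever exceed \(L\).

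Having \(d\) small classes instead of one does not change the floor condition. Counting \(d\) classes of size at most \(a\) only yields \(n\le q(a+1)-d\), which is roughly \((d-1)/(a+1)\) extra colors, not \(d\). Your ``second ingredient'' (pendant cliques forcing colors to be reused) is unspecified and does not address this.

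What is missing is an obstruction that forces some color class to be strictly smaller than \(\amin\), which \(L\) cannot see. The paper uses a shared augmenter.

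\begin{itemize}
\item Take two adjacent vertices \(v,w\) with \(\alpha(G,v)=\alpha(G,w)=4=\amin\).
\item Arrange that every independent set of size \(4\) through either of them uses the same pendant vertex \(u\). Then \(\alpha(G\setminus\{u\},v)=\alpha(G\setminus\{u\},w)=3\).
\item The classes of \(v\) and \(w\) are distinct, and \(u\) lies in at most one of them. So some class has at most \(3\) vertices, forcing \(\floor{n/q}\le 3\). By contrast, \(L\) only enforces \(\floor{n/q}\le 4\).
\end{itemize}

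The paper realizes this with a central clique of size \(4d-1\), two \(K_k\)-blocks at each of \(v\) and \(w\), and a pendant edge \(cu\). This gives \(n=4(k+d-1)\), so every \(q\le k+d-1\) has \(\floor{n/q}\ge 4\) and is ruled out. Meanwhile \(\ceil{(n+1)/5}\le k=\omega\), so \(L=k\).

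Your upper-bound sketch (greedy block-by-block coloring followed by swaps) is also only heuristic. The decisive failure, however, is that the proposed lower bound can never beat \(L\).
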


The construction exploits an obstruction not seen by \(L(G)\).  We create two
adjacent vertices \(v\) and \(w\) whose largest independent sets become too
small after deleting one pendant vertex \(u\).  Since the color classes of
\(v\) and \(w\) are distinct, and at most one of them can contain \(u\), one
of these two classes must remain small.  A large central clique then makes the
equitable lower class size too large for all \(q<k+d\), while the clique
number and the parameter \(\amin(G)\) still give only \(L(G)=k\).  The attached
large blocks make this obstruction compatible with an explicit equitable
\((k+d)\)-coloring.

\section{The construction}

Fix integers \(d\ge2\) and \(k\ge4d-1\), and let \(m=4d-1\).  Start with a
central clique \(Q\cong K_m\) containing three distinguished vertices
\(v\), \(w\), and \(c\).  Attach two \(K_k\)-blocks at \(v\), two \(K_k\)-blocks at \(w\),
and one pendant edge \(cu\).  All
vertices of the attached \(K_k\)-blocks outside \(Q\) are called
\emph{private}.  The private vertex sets are pairwise disjoint.  Let the
resulting graph be \(G_{d,k}\).  The edge \(cu\) is a \(K_2\)-block, so
\(G_{d,k}\) is a connected block graph.  It has order
\[
  |V(G_{d,k})|
  =
  m+4(k-1)+1
  =
  4k+4d-4
  =
  4(k+d-1).
\]
The choice \(m=4d-1\) makes the order exactly \(4(k+d-1)\).
Consequently, every color class in an equitable coloring with fewer than
\(k+d\) colors would have size at least four.  The assumption
\(k\ge4d-1=m\) also ensures that the attached \(K_k\)-blocks are maximum
cliques, so \(\omega(G_{d,k})=k\).

\begin{lemma}\label{lem-alpha-min}
The graph \(G_{d,k}\) has \(\amin(G_{d,k})=4\).
\end{lemma}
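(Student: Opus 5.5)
We compute \(\alpha(G_{d,k},x)\) for every vertex \(x\) from the block structure. We show the minimum is \(4\), attained at \(v\) (and symmetrically at \(w\)), and that every other vertex has \(\alpha(G_{d,k},x)\ge 4\). The key observation is that an independent set contains at most one vertex from each block. The blocks are \(Q\), the four attached \(K_k\)-blocks (two at \(v\), two at \(w\)), and the edge \(cu\).

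\emph{Step 1: a general upper bound.} The five blocks other than \(Q\) all meet \(Q\) only in \(v\), \(w\) or \(c\). Hence an independent set \(I\) has at most one vertex in \(Q\) and at most one private vertex from each attached \(K_k\)-block. Besides these it can contain only \(u\), so \(|I|\le 1+4+1=6\). This bound will not be needed tightly; it just organizes the case analysis.

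\emph{Step 2: vertices \(v\) and \(w\).} If \(v\in I\), then \(I\) contains no other vertex of \(Q\) and no private vertex of the two blocks at \(v\). It can contain at most one private vertex from each of the two blocks at \(w\), since these are not adjacent to \(v\) and lie in different blocks, and it can contain \(u\), which is adjacent only to \(c\). So \(\alpha(G_{d,k},v)\le 1+2+1=4\). The set \(\{v,u,p_1,p_2\}\) with \(p_1,p_2\) private in the two \(w\)-blocks is independent, so equality holds. The case of \(w\) is symmetric.

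\emph{Step 3: all other vertices have \(\alpha\ge4\).} We exhibit an independent set of size at least \(4\) containing each remaining vertex \(x\).
\begin{itemize}
\item If \(x\) is private in a block at \(v\), take \(x\), a private vertex in the other \(v\)-block, one private vertex in each \(w\)-block, and \(u\). This gives size \(5\).
\item If \(x\) is private in a block at \(w\), the set is symmetric to the previous one.
\item If \(x=u\), take \(u\) together with one private vertex from each of the four blocks, giving size \(5\).
\item If \(x\in Q\setminus\{v,w\}\), including \(x=c\), take \(x\) together with one private vertex from each of the four \(K_k\)-blocks, giving size \(5\). For \(x\neq c\) we may add \(u\) as well.
\end{itemize}
Private vertices from different blocks are nonadjacent. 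Here we use \(k\ge2\), so each attached block has a private vertex (indeed \(k\ge 4d-1\ge7\)).

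Therefore \(\amin(G_{d,k})=\min_x\alpha(G_{d,k},x)=4\). The only point needing care is Step 2, namely checking that no vertex outside the listed ones can join an independent set containing \(v\). Since every vertex of \(G_{d,k}\) lies in \(Q\), in \(\{u\}\), or in a private part of one of the four blocks, the enumeration is exhaustive.
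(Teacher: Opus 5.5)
Your proof is correct and takes essentially the same approach as the paper. Both bound \(\alpha(G_{d,k},v)\le 4\) by using at most one vertex per block, witness equality with \(\{v,u,p_1,p_2\}\), and exhibit independent sets of size at least five for every other vertex. Your witnesses differ only cosmetically: you never use a vertex of \(Q\setminus\{v,w,c\}\), whereas the paper does so to reach size six in some cases, and this does not affect the conclusion.
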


\begin{proof}
An independent set containing \(v\) contains no other vertex of \(Q\), no
private vertex from either \(K_k\)-block attached at \(v\), at most one private
vertex from each \(K_k\)-block attached at \(w\), and possibly the pendant
vertex \(u\).  Hence \(\alpha(G_{d,k},v)\le4\).  Equality is attained by
taking \(v\), \(u\), and one private vertex from each of the two
\(K_k\)-blocks attached at \(w\).  Thus \(\alpha(G_{d,k},v)=4\).
By symmetry, \(\alpha(G_{d,k},w)=4\).

Every other vertex lies in an independent set of size at least five.  The
vertex \(c\) can be taken together with one private vertex from each of the
four \(K_k\)-blocks.  Any vertex of \(Q\setminus\{v,w,c\}\) can be taken together with
\(u\) and one private vertex from each of the four \(K_k\)-blocks.  The vertex
\(u\) can be taken together with a vertex of \(Q\setminus\{v,w,c\}\) and one private
vertex from each of the four \(K_k\)-blocks.  Finally, if \(x\) is private in
one of the attached \(K_k\)-blocks, then \(x\), \(u\), a vertex of
\(Q\setminus\{v,w,c\}\), and one private vertex from each of the other three attached
blocks form an independent set of size six.

Therefore the minimum of the values \(\alpha(G_{d,k},x)\) is attained at
\(v\) and \(w\), and is equal to \(4\).
\end{proof}

\begin{lemma}\label{lem-lower-bound-value}
The graph \(G_{d,k}\) has \(L(G_{d,k})=k\).
\end{lemma}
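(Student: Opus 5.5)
The plan is to compute the two terms in the definition of \(L(G_{d,k})\) separately and check that the larger one equals \(k\).

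\emph{Clique term.} We show \(\omega(G_{d,k})=k\). In a block graph, every clique lies inside a single block. The blocks of \(G_{d,k}\) are:
\begin{itemize}
\item the central clique \(Q\cong K_m\) with \(m=4d-1\);
\item the four attached \(K_k\)-blocks;
\item the \(K_2\)-block \(cu\).
\end{itemize}
Since \(k\ge 4d-1=m\) and \(k\ge 4d-1\ge 7>2\), the largest block has order \(k\). Hence \(\omega(G_{d,k})=k\), as already noted after the construction.

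\emph{Order term.} By Lemma~\ref{lem-alpha-min}, \(\amin(G_{d,k})=4\). The order was computed as \(|V(G_{d,k})|=4(k+d-1)\). Substituting,
\[
  \ceil{\frac{4(k+d-1)+1}{5}}=\ceil{\frac{4k+4d-3}{5}}.
\]
We need this to be at most \(k\), which holds exactly when \(4k+4d-3\le 5k\), i.e., \(k\ge 4d-3\). This is implied by the hypothesis \(k\ge 4d-1\).

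Taking the maximum of the two terms gives \(L(G_{d,k})=k\). There is no real obstacle: the only points needing care are invoking the block-graph fact that cliques sit inside blocks, and verifying the ceiling inequality in the order term.
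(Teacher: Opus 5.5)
Your proposal is correct and follows essentially the same route as the paper: take \(\omega(G_{d,k})=k\), use \(\amin(G_{d,k})=4\) from Lemma~\ref{lem-alpha-min}, and check that \(\ceil{(4k+4d-3)/5}\le k\) follows from \(k\ge 4d-1\). The only addition is that you justify \(\omega(G_{d,k})=k\) explicitly by noting that every clique lies inside a single block, which the paper asserts without comment.
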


\begin{proof}
By Lemma~\ref{lem-alpha-min}, \(\amin(G_{d,k})=4\), and we have already noted
that \(\omega(G_{d,k})=k\).  Moreover,
\[
  \ceil{\frac{|V(G_{d,k})|+1}{5}}
  =
  \ceil{\frac{4k+4d-3}{5}}
  \le k,
\]
because \(k\ge4d-1\).  Thus
\[
  L(G_{d,k})=
  \max\left\{
    k,
    \ceil{\frac{4k+4d-3}{5}}
  \right\}
  =
  k.\qedhere
\]
\end{proof}

The lower bound on the equitable chromatic number comes from a small
obstruction which is useful to isolate.

\begin{lemma}\label{lem-shared-augmenter}
Let \(x\) and \(y\) be adjacent vertices of a graph \(G\), let
\(z\in V(G)\setminus\{x,y\}\), and let \(a\ge1\).  Suppose
\[
  \alpha(G\setminus\{z\},x)\le a-1
  \qquad\text{and}\qquad
  \alpha(G\setminus\{z\},y)\le a-1.
\]
If \(G\) has an equitable \(q\)-coloring, then
\[
  \floor{\frac{|V(G)|}{q}}\le a-1.
\]
\end{lemma}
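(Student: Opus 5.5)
We argue directly from a given equitable \(q\)-coloring of \(G\). Let \(C_x\) and \(C_y\) be the color classes containing \(x\) and \(y\). Since \(x\) and \(y\) are adjacent and the coloring is proper, \(C_x\neq C_y\). As color classes are pairwise disjoint, the vertex \(z\) lies in at most one of them. Without loss of generality, \(z\notin C_x\); otherwise exchange the roles of \(x\) and \(y\). The hypotheses are symmetric in \(x\) and \(y\), so this costs nothing.

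Next, \(C_x\) is an independent set of \(G\) that contains \(x\) and avoids \(z\). It is therefore an independent set of \(G\setminus\{z\}\) containing \(x\), so by definition of \(\alpha(G\setminus\{z\},x)\) we get
\[
  |C_x|\le \alpha(G\setminus\{z\},x)\le a-1 .
\]
In an equitable \(q\)-coloring every class has size at least \(\floor{|V(G)|/q}\). Hence \(\floor{|V(G)|/q}\le |C_x|\le a-1\), which is the claim.

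There is no real obstacle here. The only points needing care are that \(z\) cannot lie in both classes, and that \(\alpha(G\setminus\{z\},x)\) is well defined because \(x\in V(G)\setminus\{z\}\). The hypothesis \(a\ge1\) is not even needed for the argument. It only makes the bound \(a-1\ge0\) meaningful.
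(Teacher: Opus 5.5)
Your proof is correct and follows essentially the same approach as the paper. The adjacent vertices \(x\) and \(y\) lie in distinct color classes, at least one of which avoids \(z\); that class is an independent set of \(G\setminus\{z\}\) through \(x\) or \(y\), and the equitable lower bound \(\floor{|V(G)|/q}\) on class sizes gives the claim.
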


\begin{proof}
In any proper coloring, the color classes containing \(x\) and \(y\) are
distinct.  At least one of these two classes does not contain \(z\).  Choose
such a class.  It is an independent set in \(G\setminus\{z\}\) containing
either \(x\) or \(y\), and hence has size at most \(a-1\).  In an equitable
\(q\)-coloring every color class has size at least \(\floor{|V(G)|/q}\),
giving the claim.
\end{proof}

\begin{lemma}\label{lem-no-fewer-colors}
The graph \(G_{d,k}\) has no equitable \(q\)-coloring with \(q<k+d\).
\end{lemma}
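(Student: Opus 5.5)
We will apply Lemma~\ref{lem-shared-augmenter} with \(x=v\), \(y=w\), \(z=u\), and \(a=4\).

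\emph{Step 1: the hypotheses of Lemma~\ref{lem-shared-augmenter} hold.} We bound \(\alpha(G_{d,k}\setminus\{u\},v)\). Repeat the count from the proof of Lemma~\ref{lem-alpha-min}, now without \(u\). An independent set containing \(v\) contains no other vertex of \(Q\). It contains no private vertex of the two blocks at \(v\). It contains at most one private vertex from each of the two blocks at \(w\). So it has size at most \(3=a-1\). By symmetry, \(\alpha(G_{d,k}\setminus\{u\},w)\le 3\). Also \(v\) and \(w\) are adjacent, since both lie in the clique \(Q\), and \(u\notin\{v,w\}\).

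\emph{Step 2: derive the bound on \(q\).} Suppose \(G_{d,k}\) has an equitable \(q\)-coloring with \(q<k+d\), that is, \(q\le k+d-1\). The order is \(4(k+d-1)\), so
\[
  \floor{\frac{|V(G_{d,k})|}{q}}\ \ge\ \floor{\frac{4(k+d-1)}{k+d-1}}=4,
\]
because \(|V(G_{d,k})|/q\ge 4\). But Lemma~\ref{lem-shared-augmenter} gives \(\floor{|V(G_{d,k})|/q}\le 3\). This is a contradiction.

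\emph{Step 3: the degenerate range.} For very small \(q\) (e.g.\ \(q<\omega=k\)) no proper coloring exists at all. The floor estimate above covers these values anyway, so no separate case is needed.

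The only real content is the independence count in Step 1, which is a direct variant of Lemma~\ref{lem-alpha-min}. The choice \(m=4d-1\) was made precisely so that the order is \(4(k+d-1)\), which makes Step 2 immediate.
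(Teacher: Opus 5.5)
Your proposal is correct and follows essentially the same route as the paper: you apply Lemma~\ref{lem-shared-augmenter} with \(x=v\), \(y=w\), \(z=u\), \(a=4\), justified by the bound \(\alpha(G_{d,k}\setminus\{u\},v),\,\alpha(G_{d,k}\setminus\{u\},w)\le 3\), and contradict it with \(\floor{4(k+d-1)/q}\ge 4\) for \(q\le k+d-1\). The only difference is that the paper also notes equality in the independence bound, which the argument does not need.
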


\begin{proof}
Suppose for a contradiction that \(G_{d,k}\) has an equitable \(q\)-coloring
with \(q<k+d\).  Delete the pendant vertex \(u\).  In
\(G_{d,k}\setminus\{u\}\), an independent set containing \(v\) may use at most
one private vertex from each of the two \(K_k\)-blocks attached at \(w\), and no
other vertex of \(Q\).  This bound is attained by taking \(v\) and one private
vertex from each of those two blocks.  Hence
\(\alpha(G_{d,k}\setminus\{u\},v)=3\).  Similarly,
\(\alpha(G_{d,k}\setminus\{u\},w)=3\).
Since \(v\) and \(w\) are adjacent, Lemma~\ref{lem-shared-augmenter}, with
\(z=u\) and \(a=4\), applied to this coloring gives
\[
  \floor{\frac{|V(G_{d,k})|}{q}}\le3.
\]

On the other hand, \(q<k+d\) gives \(q\le k+d-1\), and so
\[
  \frac{|V(G_{d,k})|}{q}
  \ge
  \frac{4(k+d)-4}{k+d-1}
  =
  4.
\]
Thus \(\floor{|V(G_{d,k})|/q}\ge4\), a contradiction.  Thus, the claim
follows.
\end{proof}

It remains to give an equitable coloring with exactly \(k+d\) colors.

\begin{lemma}\label{lem-upper-coloring}
The graph \(G_{d,k}\) has an equitable \((k+d)\)-coloring.
\end{lemma}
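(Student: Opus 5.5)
The plan is to write down an explicit color assignment with the colors \(1,\dots,k+d\) and then check the class sizes. Since \(|V(G_{d,k})|=4(k+d)-4\), an equitable \((k+d)\)-coloring must have exactly four classes of size \(3\) and \(k+d-4\) classes of size \(4\), and that is the target.

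The coloring will be built from the following structural observations.
\begin{itemize}
\item A color class contains at most one vertex of \(Q\), possibly \(u\), and at most one private vertex from each of the four attached \(K_k\)-blocks.
\item A private vertex is adjacent only to the other vertices of its own block, namely its attachment vertex \(v\) or \(w\) and the other private vertices of that block.
\end{itemize}
So coloring the privates of a block amounts to choosing \(k-1\) distinct colors from the \(k+d-1\) colors different from the color of its attachment vertex. Equivalently, each block "misses" exactly \(d\) admissible colors. The size of class \(j\) is then
\[
[\text{\(j\) is used on \(Q\)}]+[\text{\(j\) is the color of \(u\)}]+\#\{\text{blocks using \(j\)}\}.
\]

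Concretely, I would use the following assignment.
\begin{itemize}
\item Give the \(m=4d-1\) vertices of \(Q\) the colors \(1,\dots,4d-1\), with \(v\) colored \(1\) and \(w\) colored \(2\).
\item Give \(u\) color \(1\), which is allowed since \(c\neq v\).
\item Class \(1\) takes one private vertex from each block at \(w\). It becomes \(\{v,u,p,p'\}\), of size \(4\).
\item Class \(2\) takes one private vertex from each block at \(v\). It becomes \(\{w,p,p'\}\), of size \(3\).
\item Each of the remaining \(4d-3\) colors of \(Q\) is used by exactly three of the four blocks, giving size \(4\).
\item The \(k-3d+1\) "pure" colors not used on \(Q\) are used by all four blocks (size \(4\)), except for three classes of size \(3\).
\end{itemize}
If \(k-3d+1<3\), which happens only when \(d=2\) and \(k=7\), some of these size-\(3\) classes are instead taken among the \(Q\)-colors other than \(1,2\), which then miss two blocks.

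A count confirms consistency. The privates used are
\[
2+2+3(4d-3)+4(k-3d+1)-3=4k-4=4(k-1),
\]
as required. The total number of misses is \((4d-3)+3=4d\), which is exactly \(d\) per block. Blocks at \(v\) automatically avoid color \(1\) and use \(2\), and symmetrically for \(w\).

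The main step needing care is distributing these \(4d\) misses so that each block misses exactly \(d\) colors. Each of the \(4d-3\) \(Q\)-colors other than \(1,2\) is missed by one block, and each of the three size-\(3\) classes is missed by one further block. I would assign the missing block for these \(4d\) "miss slots" cyclically through the four blocks, which gives exactly \(d\) misses per block. With this choice, each block uses exactly \(k-1\) admissible colors, each at most once, so the coloring is proper.

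Finally, I would verify that every class is independent. Each class has at most one \(Q\)-vertex, at most one private per block, and \(u\) appears only with \(v\), which is not adjacent to \(u\). No private vertex shares a class with its attachment vertex. The class sizes are four \(3\)'s and \(4\)'s otherwise, so the coloring is equitable.
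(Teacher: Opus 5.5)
Your proposal is correct and is essentially the paper's construction: $Q$ colored injectively with $1,\dots,m$, $u$ sharing $v$'s color $1$, the blocks at $w$ using color $1$ and the blocks at $v$ using color $2$, and the remaining $4d$ ``misses'' split $d$ per block (the paper's partition $Y_1,\dots,Y_4$ plays the role of your cyclic distribution). The only difference is the choice of the four short classes. You use $2$ plus three colors not on $Q$, which forces your special case $(d,k)=(2,7)$. The paper instead uniformly uses $2$, $m$ (missed by both blocks at $v$), $m+1$ and $m+2$, which is exactly your fallback pattern, so it needs no case split. In your fallback, just make sure the two misses of the doubly-missed color go to different blocks.
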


\begin{proof}
Let \(q=k+d\), and let us write \([q]=\{1,\ldots,q\}\) for the color set.
Color the central clique \(Q\) injectively with colors \(1,\ldots,m\), so that
\(v\) receives color \(1\), \(w\) receives color \(2\), and \(c\) receives
color \(3\).  Color the pendant vertex \(u\) with color \(1\).

Let \(Y=\{3,4,\ldots,m-1\}\cup\{m+1,m+2\}\).  Since \(q=k+d\), \(k\ge m\), and
\(d\ge2\), we have \(q\ge m+2\), and hence \(Y\subseteq[q]\).  Also
\(|Y|=4d-2\).  Partition \(Y\) into disjoint sets \(Y_1\), \(Y_2\), \(Y_3\),
and \(Y_4\) with
\[
  |Y_1|=|Y_2|=d-1,
  \qquad
  |Y_3|=|Y_4|=d.
\]
For the private vertices of the two \(K_k\)-blocks attached at \(v\), use
bijectively the color sets
\[
  [q]\setminus(\{1,m\}\cup Y_1)
  \qquad\text{and}\qquad
  [q]\setminus(\{1,m\}\cup Y_2).
\]
For the private vertices of the two \(K_k\)-blocks attached at \(w\), use
bijectively the color sets
\[
  [q]\setminus(\{2\}\cup Y_3)
  \qquad\text{and}\qquad
  [q]\setminus(\{2\}\cup Y_4).
\]
Each omitted set has size \(d+1\), so each displayed private-color set has
size
\[
  q-(d+1)=k-1.
\]
The two sets used at \(v\) avoid color \(1\), and the two sets used at \(w\)
avoid color \(2\).  Thus every attached \(K_k\)-block is properly colored.
The central clique is properly colored, and \(u\), colored \(1\), is adjacent
only to \(c\), colored \(3\).  Hence the whole coloring is proper.

We now count the color-class sizes.  Color \(1\) occurs on \(v\), on \(u\),
and once in each of the two \(K_k\)-blocks attached at \(w\), so its color
class has size \(4\).  Color \(2\) occurs on \(w\) and once in each of the two
\(K_k\)-blocks attached at \(v\), so its color class has size \(3\).  Every
color in \(\{3,\ldots,m-1\}\) has one central occurrence and is omitted from
exactly one attached block, so its color class has size \(4\).  Color \(m\)
has one central occurrence and is omitted from both blocks attached at \(v\),
so its color class has size \(3\).  The colors \(m+1\) and \(m+2\) have no
central occurrence and are each omitted from exactly one attached block, so
each color class has size \(3\).  Every remaining color occurs in all four
attached \(K_k\)-blocks, and hence its color class has size \(4\).

Thus exactly four color classes, namely those of colors \(2\), \(m\), \(m+1\),
and \(m+2\), have
size \(3\), and all other color classes have size \(4\).  Since
\[
  |V(G_{d,k})|=4(k+d)-4=4q-4,
\]
this is an equitable \(q\)-coloring.
\end{proof}

\begin{proof}[Proof of Theorem~\ref{thm-main}]
Lemma~\ref{lem-lower-bound-value} gives \(L(G_{d,k})=k\).  By
Lemma~\ref{lem-no-fewer-colors}, no equitable coloring exists with fewer than
\(k+d\) colors.  By Lemma~\ref{lem-upper-coloring}, an equitable
\((k+d)\)-coloring exists.  Therefore
\(\chieq(G_{d,k})=k+d=L(G_{d,k})+d\).
\end{proof}

Since \(d\) is arbitrary, the difference
\(\chieq(G_{d,k})-L(G_{d,k})\) can be made arbitrarily large on connected
block graphs.  In particular, Conjecture~\ref{conj-gap-one} is false, and
there is no absolute constant \(C\) such that \(\chieq(G)\le L(G)+C\) for
every connected block graph \(G\).

The family remains structurally restricted.  Every \(G_{d,k}\) has exactly
six blocks, exactly three cut vertices, diameter \(3\),
\(\alpha(G_{d,k})=6\), and \(\amin(G_{d,k})=4\).  Its block-cut-tree shape is
independent of \(d\) and \(k\), and no cut vertex belongs to more than three
blocks.  To see that \(\alpha(G_{d,k})=6\), observe that an independent set
contains at most one vertex of \(Q\), at most one private vertex from each of
the four attached \(K_k\)-blocks, and possibly \(u\).  The proof of
Lemma~\ref{lem-alpha-min} exhibits an independent set of size six.

Consequently, the difference \(\chieq(G)-L(G)\) remains unbounded even under
all these restrictions.  This contrasts with the result of Dybizba\'nski,
Furma\'nczyk, and
Mkrtchyan~\cite{DybizbanskiFurmanczykMkrtchyan} that
Conjecture~\ref{conj-gap-one} holds for block graphs in which every cut vertex
belongs to exactly two blocks.  Thus allowing a cut vertex to belong to three
blocks already permits the difference to be unbounded.

The first member of the family which violates the conjectured
\(L(G)+1\) bound is obtained by taking \(d=2\) and \(k=7\).  This graph has
\(|V(G_{2,7})|=32\), \(L(G_{2,7})=7\), and \(\chieq(G_{2,7})=9\).

Although no absolute additive bound exists, it remains open whether the
equitable chromatic number can be controlled multiplicatively by \(L(G)\).
Is there a constant \(C>0\) such that every connected block graph \(G\)
satisfies \(\chieq(G)\le C L(G)\)?  The graph \(G_{2,7}\) shows that any such
constant must satisfy \(C\ge9/7\).  Moreover, setting \(k=4d-1\) gives
\(\chieq(G_{d,4d-1})/L(G_{d,4d-1})=(5d-1)/(4d-1)\to5/4\) as
\(d\to\infty\).

\medskip
\noindent\textbf{Acknowledgments.}
The author thanks the anonymous referee for helpful comments that improved the
presentation of the results.

\section*{Declaration of generative AI and AI-assisted technologies in the manuscript preparation process}

During the preparation of this work, the author used OpenAI ChatGPT in order to
assist with language editing and manuscript polishing. After using this tool,
the author reviewed and edited the content as needed and takes full
responsibility for the content of the published article.

\end{document}